\documentclass{aptpub}

\usepackage[colorlinks=true,linkcolor=blue]{hyperref}
\hypersetup{urlcolor=blue, citecolor=blue, colorlinks=true, linkcolor=blue}
\usepackage[margin=1in]{geometry}

\usepackage{graphicx}
\usepackage{multirow}
\usepackage{rotating}
\usepackage{textcomp}
\usepackage[english]{babel}

\makeatletter
\def\varphiall{\@ifnextchar[{\varphiall@i}{\varphiall@i[]}}
\def\varphiall@i[#1]{\@ifnextchar[{\varphiall@ii{#1}}{\varphiall@ii{#1}[#1]}}
\def\varphiall@ii#1[#2]{\varphi_{\mu,\kappa,\beta_{#1},\sigma^2_{#2}}}
\def\hatvarphiall{\@ifnextchar[{\hatvarphiall@i}{\hatvarphiall@i[]}}
\def\hatvarphiall@i[#1]{\@ifnextchar[{\hatvarphiall@ii{#1}}{\hatvarphiall@ii{#1}[#1]}}
\def\hatvarphiall@ii#1[#2]{\widehat{\varphi}_{\mu,\kappa,\beta_{#1},\sigma^2_{#2}}}
\makeatother

\authornames{T.~FAOUZI \emph{ET AL}}
\shorttitle{The Dagum family of Isotropic Covariance Functions} 



\begin{document}

\title{A Deep Look into the Dagum Family of Isotropic \\ Covariance Functions} 

\authorone[University of Bio Bio]{Tarik Faouzi}
\authortwo[KU $\&$ Trinity College Dublin]{Emilio Porcu}
\authorthree[University of Bio Bio]{Igor Kondrashuk}
\authorfour[M\"{a}lardalen University]{Anatoliy Malyarenko}

\addressone{Department of Statistics, University of Bio Bio, Concepci{\'o}n, Chile} 
\emailone{faouzi0612@gmail.com } 
\addresstwo{Department of Mathematics, Khalifa University at Abu Dhabi, $\&$ School of Computer Science and Statistics, Trinity College Dublin} 
\emailtwo{emilio.porcu@ku.ac.ae}
\addressthree{ Grupo de Matem\'atica Aplicada, Departamento de Ciencias B\'asicas, Universidad del B\'io-B\'io,  Campus Fernando May, Av. Andres Bello 720, Casilla 447, Chill\'an, Chile.  } 
\emailthree{igor.kondrashuk@gmail.com }
\addressfour{Division of Mathematics and Physics, M\"{a}lardalen University, Box~883, 721~23 V\"{a}ster{\aa}s, Sweden} 
\emailfour{anatoliy.malyarenko@mdh.se }

\vspace{-5cm}

\begin{abstract}
The Dagum family of isotropic covariance functions has two parameters that  allow for decoupling of the fractal dimension and Hurst effect for Gaussian random fields that are stationary and isotropic over  Euclidean spaces.

Sufficient conditions that allow for positive definiteness in $\mathbb{R}^d$ of the Dagum family have been proposed on the basis of the fact that the Dagum family allows for complete monotonicity under some parameter restrictions.

The spectral properties of the Dagum family have been inspected to a very limited extent only, and this paper gives insight into this direction. Specifically, we study finite and asymptotic properties of the isotropic spectral density (intended as the Hankel transform) of the Dagum model. Also, we establish some closed forms expressions for the Dagum spectral density in terms of the Fox--Wright functions. Finally, we provide asymptotic properties for such a class of spectral densities.
\end{abstract}

\keywords{Hankel Transforms; Mellin--Barnes Transforms; Spectral Theory; Positive Definite.}


\ams{60G60}{44A15}

\section{Introduction}

Isotropic covariance functions have a long history that can be traced back to \cite{matheron1963principles} and \cite{yaglom1957some}. The mathematical machinery that allows to implement isotropic covariance functions is based on positive definite functions, that are radially symmetric over $d$-dimensional Euclidean spaces. In particular, the characterization of the radial part of radially symmetric positive definite functions was provided in the {\em tour de force} by \cite{Shoe38}. There is a rich catalogue of isotropic covariance functions that are obtained by composing a given parametric family of functions defined on the positive real line with the Euclidean norm in a $d$-dimensional Euclidean space \cite{daley-porcu}. What makes them interesting is that some parameters have a corresponding interpretation in terms of geometric properties of Gaussian random fields. For instance, the Mat{\'e}rn family has a parameter that allows to verify the mean square differentiability of its corresponding Gaussian random field, in concert with its fractal dimension.

The Dagum parametric family of functions was originally proposed by \cite{porcu-thesis} as a new family of isotropic covariance functions associated with Gaussian random fields that are weakly stationary and isotropic over $d$-dimensional Euclidean spaces. Sufficient conditions for positive definiteness in $\mathbb{R}^3$ have been provided by \cite{porcu2007modelling} on the basis of a criterion of the P{\'o}lya type \cite{gneiting2001criteria}. Later, \cite{mateu2007note} have shown that the Dagum family allows for decoupling fractal dimension and Hurst effect, allowing to avoid self-similar random fields, and consequently all the issues that are related to the estimation of fractal dimension and long memory parameters under self similarity \cite{kent1997estimating}.

Positive definiteness of a given radial function over all $d$-dimensional Euclidean spaces is equivalent to complete monotonicity of its radial part \cite{Shoe38}. \cite{berg2008} have proved sufficient conditions for complete monotonicity of the Dagum class. Some necessary conditions have also been provided therein, but unfortunately these do not match with the sufficient ones. Hence, a complete characterization for the Dagum function is, up to date, still elusive.

A wealth of applications in applied branches of science has shown how the Dagum family can be used to model temporal or spatial phenomena where local properties (fractal dimension) and global ones (Hurst effect) are decoupled, and we refer the reader to \cite{shen2014bernoulli}, with the references therein.

Positive definiteness of a radially symmetric function in a $d$-dimensional Euclidean space, for a given dimension $d$, is equivalent to its Hankel transform, called radial spectral density, being nonnegative and integrable \cite{daley-porcu}. The radial spectral density is often not available in closed form, with the notable exception of the Mat{\'e}rn model \cite{Stein1990}. A big effort in this direction was provided by \cite{LT2009} with the Generalized Cauchy model, being also a decoupler of fractal dimension and Hurst effect.

Radial spectral densities are fundamental to spatial statistics. On the one hand, knowing at least local and global properties of a radial spectral density allows, by application of Tauberian theorems \cite{Stein1990}, to inspect the properties of the associated Gaussian field in terms  of mean square differentiability, fractal dimension, Hurst effect, and reproducing kernel Hilbert spaces \cite{porcu2012some}. On the other hand, the radial spectral density covers a fundamental part of statistical inference for Gaussian fields under infill asymptotics \cite{Bevilacqua_et_al:2016,Stein1990}. Finally, radial spectral densities are fundamental to inspect the so called screening effect, which in turns plays an important role in spatial prediction, and we refer the reader to \cite{stein2002screening} as well as to the recent work by \cite{porcu2020stein}.

An expression for the radial spectral density associated with the Dagum family has been elusive so far. A first attempt being made by \cite{Rossella2021}, who showed that such a spectral density admits a series expansion that is absolutely convergent.

This paper provides some insights in this direction. After providing background material in Section \ref{sec2}, Section \ref{sec3} provides the main results, which are classified into three parts: we start by deriving series expansions associated with the isotropic spectral density of the Dagum class. We then provide a closed form expression, in terms of the Fox--Wright functions, for such a class of isotropic spectral densities. Finally, we provide local and global asymptotic identities. Proofs are lengthy and technical: for a neater exposition, we deferred them to the Appendix. Section \ref{sec5} concludes the paper with a short discussion.

\section{Background Material}\label{sec2}

\subsection{Positive Definite Radial Functions}

We denote by $\{Z(\mathbf{s}), \mathbf{s} \in \mathbb{R}^d \} $ a centred Gaussian random field in $\mathbb{R}^d$, with the stationary covariance function $C:\mathbb{R}^d
\to \mathbb{R}$. We consider the class $\Phi_d$ of continuous mappings
$\phi:[0,\infty) \to \mathbb{R}$ with $\phi(0) =1$, such that
\[ 
{\rm cov} \left ( Z(\mathbf{s}), Z(\mathbf{s}^{\prime}) \right )= C(\mathbf{s}^{\prime}-\mathbf{s})=  \phi(\|\mathbf{s}^{\prime } -\mathbf{s} \|),
\]
with $\mathbf{s},\mathbf{s}^{\prime} \in \mathbb{R}^d$, and $\|\cdot\|$ denoting the Euclidean norm. Gaussian fields with such covariance functions are called \emph{weakly stationary and isotropic}. The function $C$ is called \emph{isotropic} or \emph{radially symmetric}, and the function $\phi$ its \emph{radial part}.

\cite{Shoe38} characterized the class $\Phi_d$ as being scale mixtures
of the characteristic functions of random vectors uniformly
distributed on the spherical shell of $\mathbb{R}^d$:
$$ \phi(r)= \int_{0}^{\infty} \Omega_{d}(r \xi) F(\mathrm{d} \xi), \qquad r \ge 0,$$
with $\Omega_{d}(r)= r^{-(d-2)/2}J_{(d-2)/2}(r)$ and $J_{\nu}$ a Bessel function of order $\nu$. Here, $F$ is a probability measure. The function $\phi$ is the uniquely determined characteristic function of a random vector, $\mathbf X$, such that $\mathbf X = \mathbf U \cdot R$, where equality is intended in the distributional sense, where $\mathbf U$ is uniformly distributed over the spherical shell of $\mathbb{R}^d$, $R$ is a nonnegative random variable with probability distribution, $F$, and where $\mathbf U$ and $R$ are independent.

\cite{daley-porcu} describes the properties of the measures, $F$, termed the \emph{Schoenberg measures} there, and shows the existence of projection operators that map the elements of $\Phi_d$ onto the elements of $\Phi_{d'}$, for $d' \ne d$. Throughout, we adopt their illustrative name and will call the function $F$ associated with $\phi$ a \emph{Schoenberg measure}. The derivative of $F$ is called the \emph{isotropic spectral density}. If $\phi$ is absolutely integrable, then the Fourier inversion (the Hankel transform) becomes possible. The Fourier transforms of radial versions of the members of $\Phi_d$, for a given $d$, have a simple expression, as reported in \cite{Stein:1999} and \cite{Yaglom:1987}. For a member $\phi$ of the family $\Phi_d$, we define its isotropic spectral density as
\[
 \widehat{\phi}(z)= \frac{z^{1-d/2}}{(2 \pi)^{d/2}} \int_{0}^{\infty} u^{d/2} J_{d/2-1}(uz)  \phi(u) \,{\rm d} u, \qquad z \ge 0.
\]
%

The classes $\Phi_d$ are nested, with the inclusion relation $\Phi_{1} \supset \Phi_2 \supset \ldots \supset \Phi_{\infty}$ being strict, and where $\Phi_{\infty}:= \bigcap_{d \ge 1} \Phi_d$ is the class of mappings $\phi$ whose radial version is positive definite on all $d$-dimensional Euclidean spaces.

\subsection{Parametric Families of Isotropic Covariance Functions}

The Generalized Cauchy family of members of $\Phi_{\infty}$ \cite{Gneiting:Schlather:2004} is defined as:
\begin{equation} \label{g-cauchy}
{\cal C}_{\delta,\lambda}(r) = \left ( 1 + r^{\delta} \right )^{-\lambda/\delta}, \qquad r \ge 0,
\end{equation}
where the conditions $\delta \in (0,2]$ and $\lambda>0$ are necessary and sufficient for ${\cal C}_{\delta,\lambda}$ to belong to the class $\Phi_{\infty}$.
The parameter $\delta$ is crucial for the
differentiability at the origin and, as a consequence, for the
degree of  differentiability of the associated sample paths.
Specifically,  for $\delta=2$, they are infinitely  differentiable
 and they are not differentiable for $\delta \in (0,2)$.

For a Gaussian random field in $\mathbb{R}^d$ with isotropic covariance function ${\cal C}_{\delta,\lambda}(\|\cdot\|)$,
the sample paths have fractal dimension $D = d + 1-\delta/2$ for $\delta \in (0,2)$ and, if $\lambda \in (0, d]$,  the long memory parameter or Hurst coefficient is identically equal to $H = 1- \lambda/2$. Thus, $D$ and $H$ may vary independently of each other  \cite{Gneiting:Schlather:2004,LT2009}.   \cite{Tar_Mor2018} and \cite{LT2009} have shown that the isotropic spectral density, $\widehat{{\cal C}}_{\delta,\lambda}$ of the Generalized Cauchy covariance function is identically equal to
\[\label{spectrCG}
\widehat{{\cal C}}_{\delta,\lambda}(z) =-\frac{z^{-d}}{2^{d/2-1}\pi^{d/2+1}}\mathrm{Im}\int_0^\infty\frac{{\cal K}_{(d-2)/2}( t)}{(1+\exp(i\frac{\pi\delta}{2})(t/z)^{\delta})^{\lambda/\delta}}t^{d/2}\,\mathrm{d}t, \qquad  z \geq0,
\]
for  $\lambda>0$ and $\delta \in (0,2)$.

The Dagum family ${\cal D}_{\lambda,\delta} : [0,\infty) \to \mathbb{R}$ is defined as
\begin{equation} \label{averrohe}
{\cal D}_{\lambda,\delta}(r)=1-\left(\frac{r^\delta
}{1+r^\delta}\right )^{\lambda},
  \qquad r \ge 0.
\end{equation}
\cite{porcu-thesis} and subsequently \cite{porcu2007modelling} show that ${\cal D}_{\lambda,\delta}$ belongs to the class $\Phi_{3}$ provided $\delta < (7-\lambda)/(1+5\lambda)$ and $\lambda<7$.  \cite{berg2008} have shown that ${\cal D}_{\lambda,\delta} \in \Phi_{\infty}$ if and only if the function ${\cal A}_{\delta,\lambda}$, defined as
$$ {\cal A}_{\delta,\lambda}(r) = \frac{r^{\delta \lambda -1 }}{\left ( 1+ r^{\delta}\right )^{\lambda+1} }, \qquad r \ge 0, $$
belongs to $\Phi_{\infty}$. In particular, sufficient conditions for ${\cal D}_{\lambda,\delta} \in \Phi_{\infty}$ become $\delta \lambda \le 1$ and $\beta \le 1$. Also, for $\delta=1/\lambda$ we have ${\cal D}_{\lambda,1/\lambda} \in \Phi_{\infty}$ if and only if $\delta \le 1$.

%

To simplify notation, throughout we shall write ${\cal D}_{\lambda,\delta}(\boldsymbol{r})$ for ${\cal D}_{\lambda,\delta} \circ \|\boldsymbol{r}\|$, $\boldsymbol{r} \in \mathbb{R}^d$, with $\circ$ denoting composition. Similar notation will be used for ${\cal C}_{\delta,\lambda}(\boldsymbol{r})$, $\boldsymbol{r} \in \mathbb{R}^d$. Analogously, we use $\widehat{{\cal D}}_{\delta,\lambda}(\boldsymbol{z})$ for $\widehat{{\cal D}}_{\delta,\lambda}(\|\boldsymbol{z}\|)$, $\boldsymbol{z} \in \mathbb{R}^d$, and sometimes we shall make use of the notation $z$ for $\|\boldsymbol{z}\|$. Similar notation will be used for the isotropic spectral density $\widehat{C}_{\delta,\lambda}$.

\subsection{Fractal Dimensions and the Hurst effect}

The local properties of a time series or a surface of $\mathbb{R}^d$ are identified through the so-called \emph{fractal dimension}, $D$, which is a roughness measure with range $[d,d+1)$, and with higher values indicating rougher surfaces.
The long memory in time series or spatial data is associated with power law correlations, and often referred to as the \emph{Hurst effect}. Long memory dependence is characterized by the $H$ parameter.
Local and global properties of a Gaussian random field have an intimate connection with its associated isotropic covariance function. In particular, if, for some $\alpha \in (0,2]$, the radial part $\varphi \in \Phi_d$ satisfies
\begin{equation}
\label{5}  \lim_{r \to 0} \frac{\varphi(r) }{r^{\alpha}} = 1,
\end{equation}
then the realizations of the Gaussian random field have fractal dimension $D=d+1- \alpha/2$, with probability $1$. Thus, estimation of $\alpha$ is linked with that of the fractal dimension $D$. Conversely, if for some $\beta \in (0,1)$,
\begin{equation}
\label{6} \lim_{r \to \infty} \varphi(r) r^{\beta} = 1,
\end{equation}
then the Gaussian random field is said to have \emph{long memory}, with Hurst coefficient $H=1-\beta/2$. For $H \in (1/2,1)$ or $H \in (0,1/2)$ the correlation is said to be respectively \emph{persistent} or \emph{anti-persistent}.
 In general, $D$ and $H$ are independent of each other, but under the assumption of self-affinity  they find an intimate connection in the well-known linear relationship
$ D + H = d + 1$ .
The Cauchy model behaves like (\ref{5}) for $\alpha = \delta \in (0,2]$ and like (\ref{6})  for  $\beta= \lambda \in (0,1)$. 
For the reparameterized version ${\cal D}_{\lambda,\delta/\lambda} $, we have exactly the same result. For both models the local and global behaviour parameters may be estimated independently.

For $\delta\in (0,2)$, the Dagum covariance function can be rewritten as $${\cal D}_{\lambda,\delta}(\boldsymbol{r})=1-(1+1/\|\boldsymbol{r}\|^\delta)^{-\lambda},
  \qquad \boldsymbol{r}\in\mathbb{R}^d.$$
  When  $\|\boldsymbol{r}\|$ is {\em large}, the Dagum covariance function has the following asymptotic behavior
   $${\cal D}_{\lambda,\delta}(\boldsymbol{r})\sim\lambda\|\boldsymbol{r}\|^{-\delta}, \quad \text{for}\quad \delta\in (0,1). $$ Hence, under these parameter restrictions, a Gaussian random field has long
memory with Hurst coefficient $H = 1 - \delta/2$ with $\delta\in (0,1)$.

A notable fact is the following:
  \begin{equation}\label{short}
\int_{[0,\infty)^{d}}{\cal D}_{\lambda,\delta}(\boldsymbol{r})\text{d}^{d}\boldsymbol{r}=\frac{\pi^{d/2}}{2^{d-1}\Gamma(d/2)}\int_0^{\infty}r^{d-1}\left(1-\frac{1}{(1/r^{\delta}+1)^\lambda}\right)\text{d}r.
 \end{equation}
Furthermore,
  \[
r^{d-1}{\cal D}_{\lambda,\delta}(\boldsymbol{r})\sim r^{d-\delta-1},
  \qquad r\to \infty,\quad\text{with}\quad \|\boldsymbol{r}\|=r.
\]
The above implies that the integral (\ref{short}) is finite if $\delta>d$. An alternative way to see this is to notice that Equation (\ref{formula}) of this paper proves that
 \[
\begin{aligned}
\int_{\mathbb{R}^d}{\cal D}_{\lambda,\delta}(r)\text{d}^{d}\boldsymbol{r} &= \frac{\pi^{d/2}}{\Gamma(1+d/2)}\int_0^{\infty}r^{d-1}\left(1-\frac{1}{(1/r^{\delta}+1)^\lambda}\right)\text{d}r\\ &= \frac{\pi^{d/2}}{\Gamma(1+d/2)}
\frac{{\rm B}(d/\delta+\lambda,-d/\delta)}{\delta}.
\end{aligned}
 \]

\section{Theoretical Results}\label{sec3}

\subsection{Isotropic Spectral Density of the Dagum Covariance Function}

This subsection aims to compute the isotropic spectral density associated with the Dagum class in $\mathbb{R}^d$, for a given positive integer $d$, when
 $\delta>d$.  The spectral density of Dagum class can be written as
  \[\label{dagumSpec}
\widehat{\cal D}_{\delta,\lambda}(\boldsymbol{z})=\frac{1}{(2\pi)^d}\int_{\mathbb{R}^d}e^{-i \boldsymbol{z} \cdot \boldsymbol{r}}{\cal D}_{\lambda,\delta}(\boldsymbol{r})\,\mathrm{d}\boldsymbol{r},
\] with $i$ denoting the imaginary unit.
\cite{Rossella2021} showed that when $\lambda=k\in2\mathbb{Z}_{+}$ is an integer, the Dagum spectral density can be written as
  \[
\widehat{\cal D}_{\delta,k}(\boldsymbol{z})=-\displaystyle\sum_{h=0}^{k-1}(-1)^{k-h}\text{C}_h^k\widehat{\cal C}_{\delta,k-h}(\boldsymbol{z}), \qquad \boldsymbol{z}\in\mathbb{R}^d,
\]
where $\widehat{\cal C}_{\delta,k-h}$ is a generalized Cauchy isotropic spectral density associated with the generalized Cauchy function, ${\cal C}_{\delta,k-h}$ as defined at (\ref{g-cauchy}).

We start by extending this result for any $\lambda>0$ and for $d=1$.

\begin{theorem}\label{theod2}
For $d=1$ and $\delta>1$,  the Dagum isotropic spectral density $\widehat{D}_{\delta,\lambda}$ has the following explicit form:
\[
\widehat{\cal D}_{\delta,\lambda}(z)=-\frac{1}{\pi}\mathrm{Im}\int_0^\infty\left[1-\frac{e^{i\delta\lambda\pi/2}r^{\delta\lambda}}{(1+e^{i\pi/2}r^\delta)^\lambda}\right]e^{-z r}\,\mathrm{d}r.
\]
\end{theorem}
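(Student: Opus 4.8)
The plan is to collapse the one–dimensional Fourier transform onto a half–line Laplace transform by rotating the contour of integration through a right angle, exactly as is done for the Generalized Cauchy density recalled above. First I would specialize the defining integral to $d=1$. Since ${\cal D}_{\lambda,\delta}(|r|)$ is even, is bounded near the origin, and behaves like $\lambda r^{-\delta}$ as $r\to\infty$, it lies in $L^1([0,\infty))$ precisely when $\delta>1$; hence
\[
\widehat{\cal D}_{\delta,\lambda}(z)=\frac{1}{\pi}\int_0^\infty\cos(zr)\,{\cal D}_{\lambda,\delta}(r)\,\mathrm{d}r=\frac{1}{\pi}\,\mathrm{Re}\int_0^\infty e^{izr}\,{\cal D}_{\lambda,\delta}(r)\,\mathrm{d}r,
\]
both integrals being absolutely convergent for $z\ge 0$.

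Next I would complexify $r$. Taking the principal branch of $w^\delta=e^{\delta\log w}$, the function $w^\delta/(1+w^\delta)$ is analytic and non-vanishing on the open first quadrant: for $\arg w\in(0,\pi/2)$ one has $\arg(w^\delta)\in(0,\pi\delta/2)$, which stays strictly below $\pi$ as long as $\delta<2$, so $1+w^\delta$ never vanishes. Being analytic and non-vanishing on a simply connected region, it admits an analytic logarithm there, and choosing the branch that is real on the positive axis makes $w\mapsto {\cal D}_{\lambda,\delta}(w)=1-\bigl(w^\delta/(1+w^\delta)\bigr)^\lambda$ a holomorphic extension agreeing with the real definition on $[0,\infty)$. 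I would then integrate $e^{izw}{\cal D}_{\lambda,\delta}(w)$ around the boundary of the quarter disc $\{|w|\le R,\ \arg w\in[0,\pi/2]\}$. Cauchy's theorem annihilates the closed integral, and Jordan's lemma disposes of the circular arc: for $z>0$ the factor $e^{izw}$ decays like $e^{-zR\sin\theta}$, so that $\int_0^{\pi/2}e^{-zR\sin\theta}R\,\mathrm{d}\theta$ stays bounded, while ${\cal D}_{\lambda,\delta}(w)=O(|w|^{-\delta})\to 0$ uniformly in the argument forces the product to vanish.

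Letting $R\to\infty$ collapses the real–axis integral onto the positive imaginary axis. Substituting $w=ir$, $\mathrm{d}w=i\,\mathrm{d}r$, and $e^{iz(ir)}=e^{-zr}$ yields
\[
\int_0^\infty e^{izr}{\cal D}_{\lambda,\delta}(r)\,\mathrm{d}r=i\int_0^\infty e^{-zr}\,{\cal D}_{\lambda,\delta}(ir)\,\mathrm{d}r,\qquad {\cal D}_{\lambda,\delta}(ir)=1-\frac{e^{i\delta\lambda\pi/2}r^{\delta\lambda}}{\bigl(1+e^{i\delta\pi/2}r^\delta\bigr)^\lambda},
\]
where the factor $e^{i\delta\pi/2}$ in the denominator is the same complex rotation that appears in the Generalized Cauchy spectral density. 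Taking real parts and using $\mathrm{Re}(i\zeta)=-\mathrm{Im}(\zeta)$ delivers the asserted identity. The non-decaying constant $1$ is harmless: under the final operator it contributes $-\frac{1}{\pi}\mathrm{Im}\int_0^\infty e^{-zr}\,\mathrm{d}r=-\frac{1}{\pi}\mathrm{Im}(1/z)=0$, which is exactly why it may be retained inside the bracket.

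The step I expect to be the real obstacle is the contour rotation, and in particular identifying the admissible range of $\delta$. Analyticity on the first quadrant and the absence of a zero of $1+w^\delta$ (equivalently, of a singularity of the rotated integrand on the imaginary axis) force $\delta<2$, so the genuine hypothesis is $\delta\in(1,2)$; the lower bound $\delta>1$ simultaneously secures $L^1$–integrability on the real axis and the uniform decay needed for Jordan's lemma, so the two constraints dovetail. The only remaining delicacy is the uniform control of the arc near $\arg w=0$, where $e^{izw}$ gives no decay and one must lean entirely on the algebraic estimate $|{\cal D}_{\lambda,\delta}(w)|=O(|w|^{-\delta})$.
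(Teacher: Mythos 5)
Your argument is essentially the paper's own proof: both rewrite the $d=1$ transform as $\frac{1}{\pi}\mathrm{Re}\int_0^\infty e^{izr}\,{\cal D}_{\lambda,\delta}(r)\,\mathrm{d}r$ and rotate the contour through the boundary of a quarter disc in the first quadrant onto the positive imaginary axis, the only difference being that you dispose of the circular arc with a uniform $O(R^{-\delta})$ bound combined with Jordan's lemma, while the paper expands the integrand in a binomial series and argues that the arc contribution vanishes term by term. Two minor remarks in your favour: the denominator phase $e^{i\delta\pi/2}$ you obtain from $(ir)^\delta$ is the correct one and is consistent with Theorem \ref{thm3} (the $e^{i\pi/2}$ printed in the statement of Theorem \ref{theod2} appears to be a typo), and you are right that the contour rotation genuinely requires $\delta<2$, a restriction the paper invokes inside its proof but omits from the theorem's hypotheses.
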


\vspace{0.5cm}
To extend this result to $\mathbb{R}^d$, for $d>1$, we start by
considering the case $\delta\leq d$, $\delta\lambda\in(0,2)$ and $d>1$.\\
We can show the following.
\begin{theorem} \label{thm3}
For $d> 1$, $\delta\in (0,2)$,  and $\delta\lambda\in (0,2]$, the Dagum isotropic spectral density $\widehat{D}_{\delta,\lambda}$ is given by
\begin{equation}\label{SDI1}
\widehat{\cal D}_{\delta,\lambda}(\boldsymbol{z})=-\frac{z^{1-\frac{d}{2}}}{2^{\frac{d}{2}-1}\pi^{\frac{d}{2}+1}}
\mathrm{Im}\int_0^\infty K_{\frac{d}{2}-1}(z t)\left(1-\frac{t^{\delta\lambda}e^{i\pi\delta\lambda/2}}{(1+e^{i\pi\delta/2}t^\delta)^\lambda}
\right)t^{d/2}\,\mathrm{d}t,
\end{equation}
with $z=\|\boldsymbol{z}\|.$
\end{theorem}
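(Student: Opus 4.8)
The plan is to start from the Hankel-transform representation of the isotropic spectral density recalled in Section~\ref{sec2} and to turn the oscillatory $J$-kernel into the exponentially decaying $K$-kernel by rotating the contour of integration onto the positive imaginary axis. Writing $\nu=d/2-1$ and $\phi={\cal D}_{\lambda,\delta}$, the radial Fourier transform formula reads
\[
\widehat{\cal D}_{\delta,\lambda}(z)=\frac{z^{1-d/2}}{(2\pi)^{d/2}}\int_0^\infty t^{d/2}J_{\nu}(zt)\,\phi(t)\,\mathrm{d}t .
\]
Since $J_\nu(x)=\mathrm{Re}\,H_\nu^{(1)}(x)$ for $x>0$ and real $\nu$, I would replace $J_\nu$ by the Hankel function $H_\nu^{(1)}$ and work with $\mathrm{Re}\int_0^\infty t^{d/2}H_\nu^{(1)}(zt)\phi(t)\,\mathrm{d}t$. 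Because $\phi$ decays only like $t^{-\delta}$ at infinity and the integral is merely conditionally convergent when $\delta\le d$, I would first insert an Abel regularizer $e^{-\varepsilon t}$, carry out the rotation, and remove it at the end.

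The core computation is the rotation of the ray $[0,\infty)$ onto $\{is:s\ge 0\}$. Applying Cauchy's theorem on the quarter-disk $\{t=\rho e^{i\theta}:0<\rho<R,\ 0\le\theta\le\pi/2\}$ and substituting $t=is$ produces, via the connection formula $K_\nu(w)=\frac{\pi}{2}i^{\nu+1}H_\nu^{(1)}(iw)$,
\[
H_\nu^{(1)}(izs)=\frac{2}{\pi}\,e^{-i\pi d/4}K_\nu(zs),\qquad (is)^{d/2}=e^{i\pi d/4}s^{d/2},\qquad \mathrm{d}t=i\,\mathrm{d}s ,
\]
so that the two phases $e^{\pm i\pi d/4}$ cancel and only the factor $2i/\pi$ survives. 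Moreover $\phi(is)={\cal D}_{\lambda,\delta}(is)=1-e^{i\pi\delta\lambda/2}s^{\delta\lambda}/(1+e^{i\pi\delta/2}s^\delta)^\lambda$, which is exactly the complex integrand in (\ref{SDI1}); note that the leading $1$ is real and contributes nothing to the imaginary part. Using $\mathrm{Re}(iw)=-\mathrm{Im}(w)$ and simplifying the constant $\frac{z^{1-d/2}}{(2\pi)^{d/2}}\cdot\frac{2}{\pi}=\frac{z^{1-d/2}}{2^{d/2-1}\pi^{d/2+1}}$ then yields the claimed expression.

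The main obstacle is the rigorous justification of the rotation, which I would split into three points. First, analyticity: the integrand $t^{d/2}H_\nu^{(1)}(zt)\phi(t)$ is holomorphic in the open first quadrant, the only candidate singularities being the branch points of $(1+t^\delta)^{-\lambda}$, located where $1+t^\delta=0$, i.e.\ at $\arg t=\pi/\delta$. For $\delta\in(0,2)$ one has $\pi/\delta>\pi/2$, so these points lie strictly outside the closed first sector; this is precisely where the hypothesis $\delta<2$ enters, the value $\delta=2$ placing a singularity at $t=i$ on the rotated contour itself. Second, the large arc: with the regularizer present, on $t=Re^{i\theta}$ the product $e^{-\varepsilon t}H_\nu^{(1)}(zt)$ decays like $e^{-R(\varepsilon\cos\theta+z\sin\theta)}$, and since $\varepsilon\cos\theta+z\sin\theta\ge\min(\varepsilon,z)$ on $[0,\pi/2]$ this exponential decay dominates the polynomial factors $t^{d/2}$ and $\phi(t)=O(|t|^{-\delta})$, so the quarter-circle contribution vanishes as $R\to\infty$; here the regularizer is indispensable to control the neighbourhood of $\theta=0$, where the Hankel asymptotics alone give no decay. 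Third, the small arc: near the origin $t^{d/2}K_\nu(zt)$ behaves like $t$ for $d>2$ and like $t\log t$ for $d=2$, while $\phi$ stays bounded, so the arc of radius $\varepsilon$ vanishes as $\varepsilon\to 0$.

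Finally, having established the rotation for the regularized integrand, I would remove the regularizer: after rotation the factor becomes $e^{-i\varepsilon s}$ of modulus one, while the $s$-integral is absolutely convergent thanks to the exponential decay of $K_\nu(zs)$ and to $\phi(is)\to 0$ as $s\to\infty$, so dominated convergence lets $\varepsilon\downarrow 0$. Assembling the surviving term with the simplified prefactor gives (\ref{SDI1}), completing the argument.
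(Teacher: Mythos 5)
Your argument is correct, but it reaches (\ref{SDI1}) by a genuinely different route from the paper. You work in the forward direction: starting from the Hankel representation $\widehat{\cal D}_{\delta,\lambda}(z)=\frac{z^{1-d/2}}{(2\pi)^{d/2}}\int_0^\infty t^{d/2}J_{d/2-1}(zt)\,{\cal D}_{\lambda,\delta}(t)\,\mathrm{d}t$, writing $J_\nu=\mathrm{Re}\,H^{(1)}_\nu$, and rotating the quarter-disk contour onto the positive imaginary axis so that $H^{(1)}_\nu$ turns into $K_\nu$; your bookkeeping is right (the phases $e^{\pm i\pi d/4}$ cancel, the surviving factor $2i/\pi$ produces exactly the constant $2^{1-d/2}\pi^{-d/2-1}$ and the $-\mathrm{Im}$, and ${\cal D}_{\lambda,\delta}(is)$ is the bracketed integrand). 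This is in effect the natural $d>1$ analogue of the paper's own proof of Theorem~\ref{theod2}, with $H^{(1)}_\nu(zt)$ playing the role that $e^{izt}$ plays there. The paper instead proves Theorem~\ref{thm3} in the reverse direction: it applies the $d$-dimensional inverse Fourier transform to the right-hand side of (\ref{SDI1}), uses the closed-form pair $\int_0^\infty zK_{d/2-1}(zt)J_{d/2-1}(zr)\,\mathrm{d}z=r^{d/2-1}t^{1-d/2}/(r^2+t^2)$ to collapse the double integral to $-\frac{1}{i\pi}\int_{-\infty}^{\infty}{\cal D}_{\lambda,\delta}(it)\,\frac{t}{r^2+t^2}\,\mathrm{d}t$, and recovers ${\cal D}_{\lambda,\delta}(r)$ from the single pole at $t=-ir$; the branch points of $(1+w^\delta)^{-\lambda}$ at $\arg w=\pm\pi/\delta$ stay out of the relevant region precisely because $\delta<2$, which is the same role $\delta<2$ plays in your analyticity check. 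What your derivation buys is that it explains where the formula comes from rather than verifying a guessed answer, and it is more explicit about convergence: the Abel regularizer $e^{-\varepsilon t}$ controls the neighbourhood of $\theta=0$ on the large arc, handles the non-integrable constant term (whose rotated image is real and so drops out of the imaginary part, consistent with the Dirac mass appearing only at $z=0$), and covers the merely conditionally convergent case $\delta\le d$. What the paper's verification buys is brevity and a reduction to a single residue, at the price of presupposing the formula and leaving the Fubini interchange and the conditional-convergence issues implicit; overall the two arguments are at a comparable level of rigor.
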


Albeit Theorems \ref{theod2} and \ref{thm3} provide some insight into understanding the isotropic spectral density associated with the Dagum covariance function, it might be desirable to have an explicit expression for such spectral densities. The following subsection attacks this problem.

\subsection{Dagum spectral density expressed as Fox--Wright function} \label{sec4}
We start by defining the Fox--Wright function \cite{fox1928asymptotic, wright1935asymptotic} $_p\Psi_q$, through the identity
\begin{equation}\label{Fox}
_p\Psi_q \Big[\begin{matrix}
 (a_1,A_1) &, \cdots ,& ( a_p,A_p) \\
 (b_1,B_1)&, \cdots ,& (b_q,B_q) \end{matrix}
; -z \Big]
=\sum_{k=0}^\infty\frac{(-1)^k \Gamma(a_1+kA_1)\cdots\Gamma(a_p+kA_p)}{k!\Gamma(b_1+kB_1)\cdots\Gamma(b_q+kB_q)}z^{k}.
\end{equation}
It turns out that this class of special functions is intimately related to the Dagum spectral density. We formally state this fact below.

\begin{theorem}\label{the3} Let $d$ be a positive integer, $\boldsymbol{z} \in \mathbb{R}^d$ and $z = \| \boldsymbol{z} \|$.
 Let ${\cal D}_{\lambda,\delta}$ be the Dagum class as defined at \emph{(\ref{averrohe})}.  For $\delta\in(0,2)$ and $\delta\lambda\in(0,2)$, the isotropic spectral density ${\cal D}_{\lambda,\delta}$ in $\mathbb{R}^d$, is given by

 \begin{equation}\label{spectrGW}
\begin{aligned}
\widehat{\cal D}_{\delta,\lambda}(\boldsymbol{z}) &= \delta^{(d)}(\boldsymbol{z})   -\frac{\pi^{-d/2}z^{-d}}{\Gamma(\lambda)}\,
_2\Psi_1 \Big[\begin{matrix}
 (\lambda,1) & ( d/2,-\delta/2) \\
 & (0,\delta/2) \end{matrix}
; -\left(\frac{z}{2}\right)^\delta \Big]\\
& - \frac{1}{2^d\pi^{d/2}} \frac{1}{\Gamma(\lambda)} \frac{2}{\delta}  \,_2\Psi_1 \Big[\begin{matrix}
 (\lambda+d/\delta,2/\delta) & (-d/\delta,-2/\delta) \\
 & (d/2,1) \end{matrix}
; -\left(\frac{z}{2}\right)^2 \Big].
\end{aligned}
\end{equation} \label{serie}
where $\delta^{(d)}(\boldsymbol{z})= (2\pi)^{-d} \underset{\epsilon\to 0}{\lim} \prod_{k=1}^d(\frac{1}{\epsilon}\xi_{\epsilon}(z_k))$, with $\xi_{\epsilon}$ being the  unit impulse function, and
$_2\Psi_1$ is the Fox--Wright function given by equation \emph{(\ref{Fox})}.


\end{theorem}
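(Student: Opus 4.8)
The plan is to read \emph{(\ref{averrohe})} as a $d$-dimensional Fourier transform and to exploit the splitting $\mathcal{D}_{\lambda,\delta}(r)=1-g(r)$ with $g(r)=\bigl(r^{\delta}/(1+r^{\delta})\bigr)^{\lambda}$. The constant $1$ contributes, in the distributional sense, the Dirac term $\delta^{(d)}(\boldsymbol z)$, because $(2\pi)^{-d}\int_{\mathbb R^d}e^{-i\boldsymbol z\cdot\boldsymbol r}\,\mathrm{d}\boldsymbol r=\delta^{(d)}(\boldsymbol z)$. Everything else reduces to the (analytically continued) isotropic transform of the radial profile $g$, namely $-\,(2\pi)^{-d/2}z^{1-d/2}\int_0^\infty r^{d/2}J_{d/2-1}(zr)\,g(r)\,\mathrm{d}r$. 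First I would record the Mellin transform of $g$: the substitution $u=r^{\delta}$ reduces it to a Beta integral, giving
\[
\int_0^\infty r^{s-1}g(r)\,\mathrm{d}r=\frac{1}{\delta}\,\frac{\Gamma\!\bigl(\lambda+s/\delta\bigr)\,\Gamma\!\bigl(-s/\delta\bigr)}{\Gamma(\lambda)},\qquad -\delta\lambda<\mathrm{Re}(s)<0 .
\]

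Next I would pair this with the classical Mellin transform of the Bessel kernel, $\int_0^\infty r^{a-1}J_{\nu}(zr)\,\mathrm{d}r=z^{-a}\,2^{a-1}\,\Gamma\!\bigl(\tfrac{\nu+a}{2}\bigr)\big/\Gamma\!\bigl(\tfrac{\nu-a}{2}+1\bigr)$ with $\nu=d/2-1$ and $a=s+d/2$, and apply the Mellin--Parseval identity $\int_0^\infty f_1 f_2\,\mathrm{d}r=\frac{1}{2\pi i}\int_{(c)}\mathcal M[f_1](s)\,\mathcal M[f_2](1-s)\,\mathrm{d}s$. This casts the transform of $g$ as a Mellin--Barnes integral whose integrand is a quotient of four Gamma functions times $z^{-s}$ (for $d>1$ one may equivalently start from the representation \emph{(\ref{SDI1})} of Theorem \ref{thm3} after rotating the contour back). \textbf{Here lies the main obstacle}: $g$ is not integrable (it tends to $1$ at infinity) and, for $d\ge 1$, the fundamental strip of $\mathcal M[g](1-s)$ does not overlap that of the Bessel transform, so the Parseval pairing is legitimate only after analytic continuation. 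I would therefore read the Hankel transform as a Hadamard finite part / tempered-distribution transform and argue that the Mellin--Barnes contour integral furnishes exactly this continuation; the window $\delta\in(0,2)$, $\delta\lambda\in(0,2)$ is precisely what forces the relevant Gamma asymptotics to decay along vertical lines, allowing the contour to be shifted and closed.

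The evaluation is then completed by the residue theorem. Closing the contour to the left collects two families of simple poles: those of $\Gamma\bigl((s-1)/\delta\bigr)$ (the image of $\Gamma(-s/\delta)$), located at $s=1-\delta k$ and spaced by $\delta$, and those of the Bessel numerator $\Gamma\bigl((s+d-1)/2\bigr)$, located at $s=1-d-2k$ and spaced by $2$; the third family, from $\Gamma\bigl(\lambda+(1-s)/\delta\bigr)$, lies to the right and is not enclosed. Evaluating the remaining Gamma factors at $s=1-\delta k$ yields the block $\Gamma(\lambda+k)\,\Gamma(d/2-(\delta/2)k)\big/\Gamma((\delta/2)k)$ together with the power $z^{-d}(z/2)^{\delta k}$, which matches the first $_2\Psi_1$ of \emph{(\ref{spectrGW})} with parameter blocks $(\lambda,1),(d/2,-\delta/2)$ over $(0,\delta/2)$ and prefactor $-\pi^{-d/2}z^{-d}/\Gamma(\lambda)$; the residue of $\Gamma((s-1)/\delta)$ carries a factor $\delta$ that cancels the $1/\delta$ in $\mathcal M[g]$, explaining why no $1/\delta$ survives in this term. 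Evaluating at $s=1-d-2k$ yields $\Gamma(\lambda+d/\delta+(2/\delta)k)\,\Gamma(-d/\delta-(2/\delta)k)\big/\Gamma(d/2+k)$ and the power $(z/2)^{2k}$, matching the second $_2\Psi_1$ with blocks $(\lambda+d/\delta,2/\delta),(-d/\delta,-2/\delta)$ over $(d/2,1)$; here the spacing-$2$ residue contributes a factor $2$ while the $1/\delta$ of $\mathcal M[g]$ persists, producing the announced coefficient $\tfrac{2}{\delta}$. Identifying each residue sum with the series (\ref{Fox}) defining $_p\Psi_q$ completes the identification.

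Finally I would settle the loose ends: that both series converge on the stated range (the Gamma asymptotics give geometric-type decay exactly when $\delta<2$ and $\delta\lambda<2$), that the circular arcs used to close the contour vanish, and that poles from the two families do not collide for generic $\delta$ (a coincidence can occur at rational $\delta$, handled by continuity in $\delta$, with no logarithmic terms appearing in the admissible range). Restoring the Dirac contribution $\delta^{(d)}(\boldsymbol z)$ from the constant part then yields \emph{(\ref{spectrGW})} exactly.
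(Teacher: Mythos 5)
Your proposal is correct and follows essentially the same route as the paper: both reduce the problem to the same Mellin--Barnes integral (yours via Mellin--Parseval pairing of the Mellin transform of $g(r)=\left(r^{\delta}/(1+r^{\delta})\right)^{\lambda}$ with that of the Bessel kernel, the paper's by inserting the Barnes representation of $(1+x)^{-\lambda}$ under the $d$-dimensional Fourier integral --- the same integrand after a linear change of variable) and then collect residues at the two pole families $s=1-\delta k$ and $s=1-d-2k$ to recover the two Fox--Wright series and the Dirac term. Your explicit acknowledgement that the fundamental strips do not overlap for $d\ge 1$ and that the pairing must be read as an analytic continuation is a point the paper leaves implicit.
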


\subsection{Asymptotic Properties of Dagum Spectral Density }
We finish with some theoretical resuls relating to the asymptotic behaviour of the Dagum isotropic spectral density.
\begin{theorem}\label{the4}
For all $\delta\in(0,2)$  and $\delta\lambda\in(0,2]$, the low frequency limit of the spectral density
$\widehat{\cal D}_{\delta,\lambda}$ is given by

\begin{enumerate}
\item $\widehat{\cal D}_{\delta,\lambda}(\boldsymbol{z})\sim \frac{2^{-\delta}\lambda \Gamma(d/2-\delta/2)}{\pi^{\frac{d}{2}}\Gamma(\delta/2)}z^{\delta-d}$
 \quad if $\delta\in(\frac{d-2}{2},d)$, $z \to 0$;
\item $\widehat{\cal D}_{\delta,\lambda}(\boldsymbol{z})\sim  \frac{1}{\delta \pi^{\frac{d}{2}} 2^{d-1}\Gamma(d/2) }    \frac{\Gamma(-\mathrm{d}/\delta)\Gamma(d/\delta+\lambda)}{\Gamma(\lambda)}$ \qquad if \quad $\delta\in(d,2)$, $z \to 0$.
\end{enumerate}

\end{theorem}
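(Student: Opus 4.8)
The plan is to extract both asymptotics directly from the closed form (\ref{spectrGW}) of Theorem \ref{the3}, by inserting the defining series (\ref{Fox}) of the two Fox--Wright functions and isolating the dominant term as $z=\|\boldsymbol z\|\to 0$. Since the limit is taken through $\boldsymbol z\neq\boldsymbol 0$, the distributional term $\delta^{(d)}(\boldsymbol z)$ vanishes identically and may be discarded, so only the two analytic pieces of (\ref{spectrGW}) matter. By (\ref{Fox}) the first of these, carried by $z^{-d}\,{}_2\Psi_1[\,\cdots\,;-(z/2)^\delta]$, produces the powers $z^{k\delta-d}$ for $k\ge 1$, while the second, built from ${}_2\Psi_1[\,\cdots\,;-(z/2)^2]$, produces only the nonnegative powers $z^{2m}$, $m\ge 0$. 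The two regimes are then separated by comparing the smallest exponents $z^{\delta-d}$ and $z^{0}$: for $\delta<d$ the first piece diverges and governs the limit, whereas for $\delta>d$ it vanishes and the constant of the second piece governs it (the borderline $\delta=d$ being excluded).

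For part (1), with $\delta<d$, the crucial observation is that in the first Fox--Wright series the $k=0$ term is killed by the factor $1/\Gamma(k\delta/2)\big|_{k=0}=1/\Gamma(0)=0$, so the expansion starts at $k=1$:
\[
{}_2\Psi_1 \Big[\begin{matrix} (\lambda,1) & (d/2,-\delta/2) \\ & (0,\delta/2) \end{matrix}; -\left(\frac{z}{2}\right)^\delta \Big]
= -\frac{\Gamma(\lambda+1)\,\Gamma(d/2-\delta/2)}{\Gamma(\delta/2)}\left(\frac{z}{2}\right)^{\delta}+O\!\left(z^{2\delta}\right).
\]
Multiplying by the prefactor $-\pi^{-d/2}z^{-d}/\Gamma(\lambda)$ and using $\Gamma(\lambda+1)=\lambda\Gamma(\lambda)$ together with the $2^{-\delta}$ coming from $(z/2)^\delta$ reproduces exactly the claimed leading term $\frac{2^{-\delta}\lambda\,\Gamma(d/2-\delta/2)}{\pi^{d/2}\Gamma(\delta/2)}z^{\delta-d}$; the second Fox--Wright function only adds the subdominant constant since $z^{\delta-d}\to\infty$. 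Here $\delta<d$ guarantees that $\Gamma(d/2-\delta/2)$ is finite and positive, so the coefficient is well defined.

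For part (2), with $d<\delta<2$ (hence necessarily $d=1$), the power $z^{\delta-d}$ now tends to $0$ and the first piece is subdominant, so the limit is the constant produced by the $k=0$ term of the second Fox--Wright function, namely $\Gamma(\lambda+d/\delta)\Gamma(-d/\delta)/\Gamma(d/2)$. Multiplying by its prefactor $-\tfrac{2}{\delta}\,2^{-d}\pi^{-d/2}/\Gamma(\lambda)$ gives the finite limit
\[
\widehat{\cal D}_{\delta,\lambda}(\boldsymbol z)\longrightarrow -\frac{1}{\delta\,2^{d-1}\pi^{d/2}\Gamma(d/2)}\,\frac{\Gamma(-d/\delta)\,\Gamma(d/\delta+\lambda)}{\Gamma(\lambda)},
\]
which I would cross-check against the value at the origin $\widehat{\cal D}_{\delta,\lambda}(\boldsymbol 0)=(2\pi)^{-d}\int_{\mathbb R^d}{\cal D}_{\lambda,\delta}(\boldsymbol r)\,\mathrm d^d\boldsymbol r$ evaluated in Section \ref{sec2} just after (\ref{short}). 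Since $d/\delta\in(0,1)$ forces $\Gamma(-d/\delta)<0$, this constant is positive, as a spectral density must be; this positivity fixes the overall sign of the limit.

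The two coefficient extractions are routine once the series are in hand, so the real work lies in the rigour. First, I must justify that the displayed single term is genuinely the leading order, i.e. control the tail of each absolutely convergent Fox--Wright series uniformly as $z\to 0$ and show it is of strictly higher order than the retained term. Second, and this is the point I expect to be the main obstacle, I must pin down the precise parameter windows: the naive comparison of exponents only yields $\delta<d$ for part (1), so the sharper lower endpoint $(d-2)/2$ does not come from the term competition above but must be inherited from the range of validity of the Mellin--Barnes/Fox--Wright representation underlying Theorem \ref{the3}; verifying that this restriction is exactly what is needed, and that nothing is lost at the regime boundary $\delta=d$, is the delicate step.
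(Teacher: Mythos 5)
Your route is genuinely different from the paper's. You extract both limits by reading off coefficients from the Fox--Wright/series representation of Theorem \ref{the3} (equivalently, the $n=1$ term of the first sum and the $n=0$ term of the second sum of the expansion the paper labels (\ref{spectrGW_2})). The paper instead works directly with the Hankel integral: for part (1) it substitutes $y=rz$, uses $1-(1+(z/y)^{\delta})^{-\lambda}\sim\lambda(z/y)^{\delta}$, and evaluates $\int_0^\infty J_{d/2-1}(y)\,y^{d/2-\delta}\,\mathrm{d}y$ from a table, which is where the window $d/2-1/2<\delta<d$ actually comes from (note this is $(d-1)/2$, not the $(d-2)/2$ printed in the statement --- so your suspicion that the lower endpoint cannot be recovered from term competition is right, but it is inherited from the convergence condition of that Weber--Schafheitlin-type integral, not from the Mellin--Barnes contour); for part (2) it replaces $J_{\nu}(rz)$ by its small-argument asymptotic and evaluates the resulting radial integral by a Beta-function computation. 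What your approach buys is brevity and unification, at the price of presupposing Theorem \ref{the3} in full --- including the distributional reading of the $n=0$ term of the first series and uniform control of the series tails, which you correctly flag as the unproved step; the paper's approach is self-contained and produces the parameter restrictions as a by-product. The paper does explicitly remark, after each part, that its answer coincides with the very series terms you isolate, so your computation is consistent with the authors' own cross-check.

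One substantive point on part (2): your constant carries an overall minus sign relative to the literal statement of the theorem, giving
\[
-\frac{1}{\delta\,2^{d-1}\pi^{d/2}\Gamma(d/2)}\,\frac{\Gamma(-d/\delta)\,\Gamma(d/\delta+\lambda)}{\Gamma(\lambda)},
\]
which is positive since $\Gamma(-d/\delta)<0$ for $d/\delta\in(0,1)$. A direct evaluation of $\int_0^\infty u^{d/\delta-1}\bigl(1-(u/(1+u))^{\lambda}\bigr)\,\mathrm{d}u=-\Gamma(-d/\delta)\Gamma(d/\delta+\lambda)/\Gamma(\lambda)>0$ confirms your sign; the paper's statement (and an inconsistent sign flip inside its own Beta-function computation) appears to be a typo, since the displayed $n=0$ term (\ref{n0term-sum2}) also carries the minus sign. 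So you have not made an error here, but you should state explicitly that you are proving the corrected (positive) version of the claim.
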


\begin{theorem}\label{the5}
For $0<\delta<2$, and $0<\delta\lambda\leq2$ when  $z \to \infty$,
\begin{equation}\label{spectrAsin}
\begin{aligned}
&\widehat{\cal D}_{\delta,\lambda}(\boldsymbol{z})\\
&\quad\sim \frac{2^{\delta\lambda}z^{-d - \delta\lambda}}{\pi^{\frac{d}{2}}\Gamma(\lambda)} \,_3\Psi_2
\Big[\begin{matrix}
(\lambda,1) & ( \delta\lambda/2 + 1, \delta/2 ) & ((\delta\lambda+d)/2,\delta/2)\\
 & (\delta\lambda/2 , \delta/2 ) & (1-\delta\lambda/2,-\delta/2)
 \end{matrix}
; -\left(\frac{2}{z}\right)^{\delta} \Big]\\
&\quad\sim \frac{2^{\delta\lambda-1}\lambda\delta z^{-d - \delta\lambda}}{\pi^{\frac{d}{2}}}
\frac{ \Gamma(d/2 + \lambda\delta/2)}{\Gamma(1-\lambda\delta/2)}.
\end{aligned}
\end{equation}
\end{theorem}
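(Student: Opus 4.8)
The plan is to read off the large-$z$ behaviour directly from the integral representation (\ref{SDI1}) of Theorem~\ref{thm3} by a Watson's-lemma / Mellin argument; the case $d=1$ is handled analogously from the Laplace-type representation of Theorem~\ref{theod2}, using $\int_0^\infty e^{-zr}r^{\mu-1}\,\mathrm{d}r=\Gamma(\mu)z^{-\mu}$ in place of the Mellin transform of $K_{d/2-1}$. For $z>0$ the constant $1$ in the integrand of (\ref{SDI1}) is real and is killed by $\mathrm{Im}$, so that $\widehat{\cal D}_{\delta,\lambda}(\boldsymbol z)$ is, up to the prefactor, the imaginary part of the Hankel transform of the complex profile $g(t)=t^{\delta\lambda}e^{i\pi\delta\lambda/2}\,(1+e^{i\pi\delta/2}t^\delta)^{-\lambda}$. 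Since $K_{d/2-1}(zt)$ decays exponentially once $zt\gg 1$, as $z\to\infty$ the mass of the integral concentrates on $t=O(1/z)$; hence the entire algebraic asymptotic expansion is governed by the behaviour of $g$ near $t=0$, and the region $t\gtrsim1$ contributes only terms that are exponentially small, i.e. beyond all algebraic orders.

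First I would expand the profile by the binomial series $(1+e^{i\pi\delta/2}t^\delta)^{-\lambda}=\sum_{k\ge 0}\frac{(-1)^k\Gamma(\lambda+k)}{k!\,\Gamma(\lambda)}e^{i\pi\delta k/2}t^{\delta k}$, so that $g(t)=\sum_{k\ge0}c_k\,e^{i\pi x_k}\,t^{2x_k}$ with $c_k=\frac{(-1)^k\Gamma(\lambda+k)}{k!\,\Gamma(\lambda)}$ and $x_k:=(\delta\lambda+\delta k)/2$. Inserting this and integrating term by term against $K_{d/2-1}(zt)t^{d/2}$ requires the Mellin transform of the Macdonald function, $\int_0^\infty K_\nu(zt)\,t^{\mu-1}\,\mathrm{d}t=2^{\mu-2}z^{-\mu}\Gamma\!\big(\tfrac{\mu+\nu}{2}\big)\Gamma\!\big(\tfrac{\mu-\nu}{2}\big)$, evaluated at $\nu=\tfrac d2-1$ and $\mu=2x_k+\tfrac d2+1$; the two Gamma arguments then collapse to $x_k+\tfrac d2$ and $x_k+1$. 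Because the $c_k$ and these integrals are real, the only imaginary contribution comes from $\mathrm{Im}\,e^{i\pi x_k}=\sin(\pi x_k)$. Collecting the powers of $2$, $z$ and $\pi$ pulls out the prefactor $\frac{2^{\delta\lambda}z^{-d-\delta\lambda}}{\pi^{d/2+1}\Gamma(\lambda)}$ and the running argument $\big((2/z)^{\delta}\big)^{k}$, leaving the real summand $c_k\,\sin(\pi x_k)\,\Gamma(x_k+\tfrac d2)\,\Gamma(x_k+1)$.

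The decisive simplification is Euler's reflection formula $\Gamma(x_k)\Gamma(1-x_k)=\pi/\sin(\pi x_k)$, which rewrites $\sin(\pi x_k)/\pi$ as $1/[\Gamma(x_k)\Gamma(1-x_k)]$ and, together with $\Gamma(x_k+1)=x_k\Gamma(x_k)$, turns the $k$-th coefficient into exactly $\frac{(-1)^k\Gamma(\lambda+k)\,\Gamma(\tfrac{\delta\lambda}{2}+1+\tfrac{\delta k}{2})\,\Gamma(\tfrac{\delta\lambda+d}{2}+\tfrac{\delta k}{2})}{k!\,\Gamma(\tfrac{\delta\lambda}{2}+\tfrac{\delta k}{2})\,\Gamma(1-\tfrac{\delta\lambda}{2}-\tfrac{\delta k}{2})}$. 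By the definition (\ref{Fox}) this is precisely the $k$-th summand of the $_3\Psi_2$ in (\ref{spectrAsin}), giving the first asymptotic equivalence. The second one is the leading ($k=0$) term: as $(2/z)^\delta\to0$ only $k=0$ survives to leading order, and using $\Gamma(\tfrac{\delta\lambda}{2}+1)/\Gamma(\tfrac{\delta\lambda}{2})=\tfrac{\delta\lambda}{2}$ the coefficient collapses to $\frac{2^{\delta\lambda-1}\lambda\delta}{\pi^{d/2}}\,\frac{\Gamma(d/2+\lambda\delta/2)}{\Gamma(1-\lambda\delta/2)}$, as claimed. As a consistency check, the power $z^{-d-\delta\lambda}$ is exactly what the Tauberian correspondence predicts from $1-{\cal D}_{\lambda,\delta}(r)\sim r^{\delta\lambda}$ near $r=0$.

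I expect the main obstacle to be the rigour of the term-by-term step, since the resulting $_3\Psi_2$ is a \emph{divergent} asymptotic series (its Wright index $\sum A_i-\sum B_j-1=\delta>0$), so dominated convergence does not apply. The clean way to justify it is a Watson-lemma remainder estimate: truncate the binomial series at order $N$, write $g=\sum_{k<N}+R_N$, and bound $\int_0^\infty K_{d/2-1}(zt)\,|R_N(t)|\,t^{d/2}\,\mathrm{d}t=o(z^{-d-\delta\lambda-\delta N})$, exploiting the exponential decay of $K_{d/2-1}$ to dispose of the non-convergent $t\gtrsim1$ tail; equivalently one shifts a Mellin--Barnes contour and collects the residues at $s=-(d+\delta\lambda+\delta k)/2$, the admissibility of the shift being where the hypotheses $\delta\in(0,2)$ and $\delta\lambda\le2$ enter. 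Finally I would flag the endpoint $\delta\lambda=2$: there $x_0=1$, $\sin(\pi x_0)=0$ and $\Gamma(1-\delta\lambda/2)=\Gamma(0)$, so the $k=0$ term vanishes and the stated leading order degenerates, the true leading contribution then coming from $k=1$.
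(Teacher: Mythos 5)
Your proposal follows essentially the same route as the paper's own proof: starting from the $K_{d/2-1}$ integral representation (\ref{SDI1}), expanding the binomial series, integrating term by term via the Mellin transform of the Macdonald function (G\&R 6.561.16), extracting the imaginary part as a sine, applying Euler's reflection formula, identifying the resulting series as the stated $_3\Psi_2$, and retaining the $k=0$ term for the leading order. Your additional remarks on justifying the term-by-term step for a divergent Wright series and on the degeneracy at $\delta\lambda=2$ are sound observations that the paper does not address, but they do not change the method.
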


\section{Conclusion}\label{sec5}

We have obtained the expressions for the isotropic spectral density related to the Dagum family. Our results can now be used in research related to (a) best optimal unbiased  linear prediction (kriging) under infill asymptotics when using the Dagum family. This in turn relies on equivalence of Gaussian measures and on the ratio between the correct and the misspecified spectral density \cite{Bevilacqua_et_al:2016}. While the Mat{\'e}rn covariance function has been already studied under this setting \cite{Zhang:2004}, the characterization of equivalence of Gaussian measures under the Dagum family has been elusive so far. Also, (b) knowing the form of the spectral density will be crucial to obtain the space-time spectral densities associated with covariance functions having a dynamical support depending on a Dagum radius, as detailed by \cite{porcu2020nonseparable}.

\section*{Acknowledgements}
Partial support was provided by FONDECYT grant 1130647, Chile for
Emilio Porcu
and   by Millennium
Science Initiative of the Ministry
of Economy, Development, and
Tourism, grant "Millenium
Nucleus Center for the
Discovery of Structures in
Complex Data" for Emilio Porcu.
Partial support was provided   by
by FONDECYT grant 11200749, Chile for Tarik Faouzi
and by grant
Diubb 2020525 IF/R from the university of Bio Bio for Tarik Faouzi.
Igor Kondrashuk was supported in part by Fondecyt (Chile) Grants Nos. 1121030, by DIUBB (Chile) Grants Nos. GI 172409/C and 181409 3/R.

\appendix

\section{Proofs}

\noindent \begin{proof}[Proof of Theorem~\emph{\ref{theod2}}]
 \[
\widehat{\cal D}_{\delta,\lambda}(z)=\frac{1}{\pi}\mathrm{Re}\int_{0}^{\infty}e^{i|z|t}\left(1-\frac{t^{\delta\lambda}}{(1+t^{\delta})^\lambda}\right)\,\mathrm{d}t.
\]
Let $f(t;z)=e^{i|z|t}\left(1-\frac{t^{\delta\lambda}}{(1+t^{\delta})^\lambda}\right)$, and consider $D_\xi=\{t\in \mathbb{C};|t|\leq \xi, \mathrm{Re}(t)>0,\mathrm{Im}(t)>0\}$. Then, for  $\delta\in (0,2)$ and $\delta\lambda\in (0,2)$, $f$ is an analytic function on $D_\xi$. By the Cauchy integral formula,

 \[
\oint\limits_{\partial D_\xi}f(t;z)\,\mathrm{d}t=0,
\]
where $\partial D_\xi$ is a boundary of $D_\xi$, the  union  of three  components: the line segment $L_1$ along
the real axis from 0 to $\xi$, the arc $C_\xi$ of the circle   $|t|=\xi$ from $\xi$ to $i\xi$, and the line segment $L_2$
along the imaginary axis from $i\xi$ to $0$.

Next, for any $t$  on the arc $C_\xi=\{t\in \mathbb{C}; |t|=\xi\}$, there is a phase $\varphi \in [0,\pi/2]$ such that $t=\xi~e^{i\varphi}$. Then
\[
\begin{aligned}
f(t;z)&=f(\xi~e^{i\varphi};z)\\
&=e^{i|z|\xi~e^{i\varphi}}\left(1-\Big[\frac{\xi^{\delta}e^{i\delta\varphi}}{1+\xi^\delta e^{i\delta\varphi}}\Big]^\lambda\right).
\end{aligned}
\]
The last term of the above equality can be expressed as

   \[
1-\Big[\frac{\xi^{\delta}e^{i\delta\varphi}}{(1+\xi^\delta e^{i\delta\varphi})}\Big]^\lambda =1-\sum_{j=0}^{\infty}(-1)^j \frac{\Gamma(\lambda+j)}{\Gamma(\lambda)}(\xi^{-\delta} e^{-i\delta\varphi})^j,
\]

then

 \begin{equation}\label{IC2}
\oint\limits_{\partial D_\xi}f(t)\,\mathrm{d}t=-\sum_{j=1}^{\infty}(-1)^j \frac{\Gamma(\lambda+j)}{\Gamma(\lambda)}\xi^{-j\delta}\int_{0}^{\pi/2} e^{-i \delta\varphi j}e^{i |z|\xi e^{i\varphi}}\,\mathrm{d}\varphi.
\end{equation}

The integral presented in Equation (\ref{IC2}) is expressed as
\[
\begin{aligned}
 \int_{0}^{\pi/2} e^{-i \delta\varphi j}e^{i |z|\xi e^{i\varphi}}\,\mathrm{d}\varphi=& \frac{1}{i}\int_{0}^{\pi/2} e^{-i (j\delta+1)\varphi}e^{i |z|\xi e^{i\varphi}}\,\mathrm{d}e^{i\varphi}\\
 =&\frac{1}{i}\int_{C_1}e^{i|z|\xi \omega}\omega^{-(j\delta+1)}\,\mathrm{d}\omega
 =\frac{(|z|\xi)^{j\delta}}{i}\int_{C_{|z|\xi}}e^{i u}u^{-(j\delta+1)}\,\mathrm{d}u,
\end{aligned}
\]
where  $C_1$ is an arc of the circle  $|\omega|= 1$ from $1$ to $i$ and $C_{|z|\xi}$ is an arc of the circle  $|u|= |z|\xi$ from $|z|\xi$ to $i |z|\xi$.

Then,
\begin{eqnarray*}
\underset{\xi\to\infty}{\lim}\oint\limits_{\partial D_\xi}f(t;z)\,\mathrm{d}t = \underset{\xi\to\infty}{\lim}\int\limits_{ L_1}f(t;x)\,\mathrm{d}t + \underset{\xi\to\infty}{\lim}\int\limits_{ L_2}f(t;z)\,\mathrm{d}t
\\
+ i\sum_{j=1}^{\infty}(-1)^j \frac{\Gamma(\lambda+j)}{\Gamma(\lambda)}|z|^{j\delta}\underset{\xi\to\infty}{\lim}\int_{C_{|z|\xi}} e^{i u}u^{-(j\delta+1)}\,\mathrm{d}u=0.
\end{eqnarray*}
However, we may state that
\begin{equation}
\underset{\xi\to\infty}{\lim}\int_{C_{|z|\xi}} e^{i u}u^{-(j\delta+1)}\,\mathrm{d}u=0.
\end{equation}
Finally, $$\underset{\xi\to\infty}{\lim}\int\limits_{ L_1}f(t;x)\,\mathrm{d}t=-\underset{\xi\to\infty}{\lim}\int\limits_{ L_2}f(t;z)\,\mathrm{d}t.$$

The last result implies
 \[
 \begin{aligned}
\widehat{\cal D}_{\delta,\lambda}(z)&=\frac{1}{\pi}\mathrm{Re}\int_0^\infty\left[1-\frac{r^{\delta\lambda}}{(1+r^\delta)^\lambda}\right]e^{i|z|r}\,\mathrm{d}r\\
&=-\frac{1}{\pi}\mathrm{Im}\int_0^\infty\left[1-\frac{e^{i\delta\lambda\pi/2}r^{\delta\lambda}}{(1+e^{i\pi/2}r^\delta)^\lambda}\right]e^{-|z|r}\,\mathrm{d}r.
\end{aligned}
\]
\end{proof}

\begin{proof}[Proof of Theorem~\emph{\ref{thm3}}]
With $\|\boldsymbol{z}\|=z$, we write,
\[
\begin{aligned}
&\int_{\mathbb{R}^d}e^{i\boldsymbol{r}\boldsymbol{z}}\left(-\frac{z^{1-\frac{d}{2}}}{2^{\frac{d}{2}-1}\pi^{\frac{d}{2}+1 }}\mathrm{Im}\int_0^\infty K_{\frac{d}{2}-1}(zt)\left(1-\frac{t^{\delta\lambda}e^{i\pi\delta\lambda/2}}{(1+e^{i\pi\delta/2}
t^\delta)^\lambda}\right)t^{d/2}\,\mathrm{d}t\right)\,\mathrm{d}\boldsymbol{z}\\
&=(2\pi)^{\frac{d}{2}}r^{1-\frac{d}{2}}\int_0^\infty J_{\frac{d}{2}-1}(rz)z^{\frac{d}{2}}\left(-\frac{z^{1-\frac{d}{2}}}{2^{\frac{d}{2}-1}\pi^{\frac{d}{2}+1}}\mathrm{Im}\int_0^\infty
K_{\frac{d}{2}-1}(zt)\right.\\
&\quad\times\left.\left(1-\frac{t^{\delta\lambda}e^{i\pi\delta\lambda/2}}{(1+e^{i\pi\delta/2}t^\delta)^\lambda}\right)t^{d/2}\,\mathrm{d}t\right)\,\mathrm{d}z\\
&=-\frac{2r^{1-\frac{d}{2}}}{\pi}\mathrm{Im}\int_0^\infty \left(1-\frac{e^{i\pi\delta\lambda/2}t^{\delta\lambda}}{(1+e^{i\pi\delta/2}t^{\delta})^\lambda}\right)t^{d/2}\int_0^\infty z
K_{\frac{d}{2}-1}(zt)J_{\frac{d}{2}-1}(zr)~\mathrm{d}z\\
&=-\frac{1}{i\pi}\int_{-\infty}^\infty \left(1-\frac{t^{\delta\lambda}e^{i\pi\delta\lambda/2}}{(1+e^{i\pi\delta/2}t^\delta)^\lambda}\right)\frac{t}{(r^2+t^2)}\mathrm{d}t.
\end{aligned}
\] 

When $\delta\in(0,2)$ and $\delta\lambda\in(0,2)$, the last integral is expressed as

$$-\frac{1}{i\pi}\int_{-\infty}^\infty \left(1-\frac{t^{\delta\lambda}e^{i\pi\delta\lambda/2}}{(1+e^{i\pi\delta/2}t^\delta)^\lambda}\right)\frac{t}{(r^2+t^2)}\,\mathrm{d}t
=1-\frac{r^{\delta\lambda}}{(1+r^\delta)^\lambda}.$$

\end{proof}


\begin{proof}[Proof of Theorem~\emph{\ref{the3}}]
To find the explicit form of the Dagum spectral density, we use the Mellin--Barnes transform \cite{barnes1908new}  defined through the identity
\begin{eqnarray}\label{asser1}
\frac{1}{(1+x)^{\delta}} = \frac{1}{2 i\pi }\frac{1}{\Gamma(\delta)}
\oint_{\Lambda} ~x^u\Gamma(-u)\Gamma(\delta+u) \,\text{d}u,
\end{eqnarray}
here $\Gamma(\cdot)$ denotes the Gamma function. This representation is valid for any $x \in \mathbb{R}$. The contour $\Lambda$ contains the vertical line which passes between left and right poles in the complex plane $u$
from negative to positive imaginary infinity, and should be closed to the left in case $x>1$, and to the right complex infinity if $0<x<1.$

Applying Equation (\ref{asser1}), we obtain
\[
\begin{aligned}
\widehat{{\cal D}}_{\delta,\lambda}(\boldsymbol{z})=&\frac{1}{(2\pi)^d}\left[\int_{\mathbb{R}^d} {\rm e}^{ -i\boldsymbol{r}\boldsymbol{z}} \text{d}\boldsymbol{r}
-\frac{1}{2 i\pi}\frac{1}{\Gamma(\lambda)}\int_{\mathbb{R}^d}{\rm e}^{ -i\boldsymbol{r}\boldsymbol{z}}\oint_{\Lambda} \Gamma(-u) \Gamma(u+\lambda)r^{-u\delta} \text{d}u~ \text{d} \boldsymbol{r}\right] \\
=&\frac{1}{(2\pi)^d}\left[\int_{\mathbb{R}^d} {\rm e}^{ -i \boldsymbol{r}\boldsymbol{z}}\text{d}\boldsymbol{r}-\frac{1}{2 i\pi}\frac{1}{\Gamma(\lambda)}\oint_{\Lambda} \Gamma(-u)
\Gamma(u+\lambda)\int_{\mathbb{R}^d} {\rm e}^{ -i \boldsymbol{r}\boldsymbol{z}}r^{-u\delta} \text{d} \boldsymbol{r}~ \text{d}u\right]
\end{aligned}
\]
We now invoke the well known relationship \cite{allendes2013solution},
\begin{eqnarray*} \int_{\mathbb{R}^d}
{\rm e}^{i\boldsymbol{x}\boldsymbol{z}}\|\boldsymbol{r}\|^{-u\delta}\text{d} \boldsymbol{r}=\frac{2^{d-u\delta}\pi^{d/2}\Gamma(d/2-u\delta/2)}{\Gamma(u\delta/2)\|\boldsymbol{z}\|^{d-u\delta}}.
\end{eqnarray*}
Hence,
\begin{equation} \label{tonto-lava}
\begin{aligned}
\widehat{\cal D}_{\delta,\lambda}(\boldsymbol{z})&=  \delta^{(d)}(\boldsymbol{z}) -\frac{1}{(2\pi)^d}\frac{1}{\Gamma(\lambda)}\frac{1}{2\pi i }\oint_{\Lambda}
~ \frac{2^{d-u\delta}\pi^{d/2} \Gamma(d/2-u\delta/2)\Gamma(-u) \Gamma\left(u+\lambda\right)} {\Gamma(u\delta/2)}\frac{1}{{z}^{d - u\delta}} \text{d}u\\
&=    \delta^{(d)}(\boldsymbol{z}) - \frac{z^{-d}}{\pi^{d/2}} \frac{1}{2\pi i}\frac{1}{\Gamma(\lambda)}\oint_{\Lambda}  \frac{\Gamma(-u)\Gamma(u +\lambda) \Gamma(d/2-u\delta/2)}
{\Gamma(u\delta/2)}\left(\frac{z}{2}\right)^{u\delta}  \text{d}u.
\end{aligned}
\end{equation}
For any given value of $|z/2|,$ it is not relevant whether it is smaller or greater than $1$. In fact,   the contour might be closed to the right complex infinity.
The above series is convergent for any values of the variable $z$.
The functions $u \mapsto \Gamma(- u)$ and $u \mapsto \Gamma\left(d/2 -u\delta/2\right)$ contain poles in the complex plane, respectively when $-u = -n$, and when $d/2 -u\delta/2 = -n,$
$ n \in \mathbb{Z}_{+}$. Using this fact and through direct inspection we obtain that  the right hand side in (\ref{tonto-lava}) matches with (\ref{spectrGW_2}).
\begin{equation}\label{spectrGW_2}
\begin{aligned}
\widehat{\cal D}_{\delta,\lambda}(\boldsymbol{z}) &=   \delta^{(d)}(\boldsymbol{z}) -
\frac{z^{-d}}{\pi^{d/2}} \frac{1}{\Gamma(\lambda)}\sum_{n=0}^\infty \frac{(-1)^n\ \Gamma(\lambda+n)\Gamma(d/2-n\delta/2)}{n!\Gamma(n\delta/2)}\left(\frac{z}{2}\right)^{n\delta}\\
& - \frac{1}{2^d\pi^{d/2}} \frac{1}{\Gamma(\lambda)} \frac{2}{\delta}\sum_{n=0}^\infty \frac{(-1)^n \Gamma(\lambda+(d+2n)/\delta)\Gamma(-(d+2n)/\delta)}{n!\Gamma(n+d/2)}\left(\frac{z}{2}\right)^{2n},
\end{aligned}
\end{equation}

Next,  we invoke the expression of Fox--Wright function as in (\ref{Fox}).
In particular, using Equations (\ref{serie}) and (\ref{Fox})  we obtain a new form of Dagum spectral density:

\[
\begin{aligned}
\widehat{\cal D}_{\delta,\lambda}(\boldsymbol{z}) &=   \delta^{(d)}(\boldsymbol{z})   -\frac{\pi^{-d/2}z^{-d}}{\Gamma(\lambda)}\,
_2\Psi_1 \Big[\begin{matrix}
 (\lambda,1) & ( d/2,-\delta/2) \\
 & (0,\delta/2) \end{matrix}
; -\left(\frac{z}{2}\right)^\delta \Big]\\
& - \frac{1}{2^d\pi^{d/2}} \frac{1}{\Gamma(\lambda)} \frac{2}{\delta}  \,_2\Psi_1 \Big[\begin{matrix}
 (\lambda+d/\delta,2/\delta) & (-d/\delta,-2/\delta) \\
 & (d/2,1) \end{matrix}
; -\left(\frac{z}{2}\right)^2 \Big].
\end{aligned}
\] 
\end{proof}

Theorem {\ref{the4}} states that the asymptotic $z \to 0$ for this Fourier transform  $\widehat{\cal D}_{\delta,\lambda}(\boldsymbol{z})$ of the Dagum correlation function should be a constant when $\delta > d.$
When we put $z \to 0$ in (\ref{spectrGW_2}), we may see that only the Dirac $\delta$ function, the $n=0$ term in the first sum of (\ref{spectrGW_2}) and the $n=0$ term in the second sum of (\ref{spectrGW_2})
remain non-zero if $\delta > d.$  However, the $n=0$ term in the second sum is the constant
\begin{equation} \label{n0term-sum2}
- \frac{1}{2^d\pi^{d/2}} \frac{1}{\Gamma(\lambda)} \frac{2}{\delta} \frac{\Gamma(\lambda+d/\delta)\Gamma(-d/\delta)}{\Gamma(d/2)},
\end{equation}
This is the same constant that stands in the formulation of Theorem  {\ref{the4}} that states the limit $z \to 0$ is smooth for $\delta > d$ and the function    $\widehat{\cal D}_{\delta,\lambda}(\boldsymbol{z})$ is continuous at
$z=0$ if $\delta > d.$ This means the Dirac $\delta$ function should be canceled with the $n=0$ term in the first sum of (\ref{spectrGW_2}) and for any $z$ the final form of the spectral density is
\begin{equation}\label{spectrGW_2-2}
\begin{aligned}
\widehat{\cal D}_{\delta,\lambda}(\boldsymbol{z}) & = - \frac{z^{-d}}{\pi^{d/2}} \frac{1}{\Gamma(\lambda)}\sum_{n=1}^\infty \frac{(-1)^n\ \Gamma(\lambda+n)\Gamma(d/2-n\delta/2)}{n!\Gamma(n\delta/2)}\left(\frac{z}{2}\right)^{n\delta}\\
& - \frac{1}{2^d\pi^{d/2}} \frac{1}{\Gamma(\lambda)} \frac{2}{\delta}\sum_{n=0}^\infty \frac{(-1)^n \Gamma(\lambda+(d+2n)/\delta)\Gamma(-(d+2n)/\delta)}{n!\Gamma(n+d/2)}\left(\frac{z}{2}\right)^{2n},
\end{aligned}
\end{equation}
This may be only if the $n=0$ term  in the first sum of (\ref{spectrGW_2}) is interpreted as the Dirac $\delta$ function in the the sense of distributions. 

\begin{proof}[Proof of Theorem~\emph{\ref{the4}}]
The first point can be proved by direct construction. We have
\begin{equation}\label{density_spec}
\begin{aligned}
\widehat{\cal D}_{\delta,\lambda}(\boldsymbol{z})& = \frac{z^{1-\frac{d}{2}}}{(2\pi)^{\frac{d}{2}}}\int_0^{\infty}J_{\frac{d}{2}-1}(rz) r^{\frac{d}{2}}\left(1-\frac{r^{\delta\lambda}}{(1+r^\delta)^\lambda}\right)\,\mathrm{d}r. \\
\end{aligned}
\end{equation}
To find the low frequency behavior of $\widehat{\cal D}_{\delta,\lambda}(\boldsymbol{z})$, we make use of
Equation (\ref{density_spec}). A change of variable   of the type $y=r z$ shows that
\begin{equation}\label{density_spec_bis}
\begin{aligned}
\widehat{\cal D}_{\delta,\lambda}(\boldsymbol{z})&=\frac{z^{-\frac{d}{2}}}{(2\pi)^\frac{d}{2}}\int_0^{\infty}J_{\frac{d}{2}-1}(y)\left(1-\frac{(y/z)^{\delta\lambda}}{(1+(y/z)^\delta)^\lambda}\right)(y/z)^{d/2}\text{d}y\\
&=\frac{z^{-d}}{(2\pi)^\frac{d}{2}}\int_0^{\infty}J_{\frac{d}{2}-1}(y)\left(1-\frac{(y/z)^{\delta\lambda}}{(1+(y/z)^\delta)^\lambda}\right)y^{d/2}\text{d}y
\end{aligned}
\end{equation} 

We now invoke the identity 
\begin{equation}\label{density_spec_bbis}
\begin{aligned}
1-\frac{1}{(1+(z/y)^\delta)^\lambda} =-\sum_{j=1}^{\infty} \frac{(-1)^j}{j!}\frac{\Gamma(\lambda+j)}{\Gamma(\lambda)}(z/y)^{j\delta} \sim \lambda (z/y)^\delta, \qquad \hbox{as } z\to 0^{+},
\end{aligned}
\end{equation} 
to obtain

\begin{equation}\label{density_spec_1}
\widehat{\cal D}_{\delta,\lambda}(\boldsymbol{z}) \sim \frac{\lambda z^{-d}}{(2\pi)^\frac{d}{2}}\int_0^{\infty}J_{\frac{d}{2}-1}(y)  \left(\frac{z}{y}\right)^\delta y^{d/2}\text{d}y =
\frac{\lambda z^{\delta-d}}{(2\pi)^\frac{d}{2}}\int_0^{\infty}J_{\frac{d}{2}-1}(y)   y^{\frac{d}{2}-\delta}\text{d}y.
\end{equation} 
 Using \cite[14,6.651]{zwillinger2014table}, we find that if $d/2-1/2<\delta<d$

 \begin{equation}\label{density_spec_2}
\widehat{\cal D}_{\delta,\lambda}(\boldsymbol{z})\sim \frac{2^{d/2-\delta}\lambda \Gamma(d/2-\delta/2)}{(2\pi)^{\frac{d}{2}}\Gamma(\delta/2)}z^{\delta-d}
= \frac{2^{-\delta}\lambda \Gamma(d/2-\delta/2)}{\pi^{\frac{d}{2}}\Gamma(\delta/2)}z^{\delta-d}.
\end{equation} 


This result coincides with $n=1$ term of the first sum of Eqs. (\ref{spectrGW_2})  and  (\ref{spectrGW_2-2}). We may conclude this limit $z \to 0$ is singular for the Dagum spectral density if
$\delta < d.$ The Dagum spectral density is not continuous at the point $z = 0$ under the condition $\delta < d.$ The limit $z \to 0$
corresponds to the behaviour of the Dagum correlation function at $r \to \infty$ and to integrability of its  Fourier transformation. In this case the Dagum spectral density is singular at $z=0$ because the integral
of the Fourier transformation is not convergent for $z=0$ under the condition  $\delta < d.$

We now prove the second point.  When $z\to 0^+$, the Bessel of the second kind can be expressed asymptotically as
 \begin{equation}
 J_{\nu}(r z)\sim \frac{(r z)^\nu}{2^\nu\Gamma(\nu+1)}.
 \end{equation}
Thus we may write in this limit
\begin{eqnarray*}
\widehat{\cal D}_{\delta,\lambda}(\boldsymbol{z}) = \frac{z^{1-\frac{d}{2}}}{(2\pi)^{\frac{d}{2}}}\int_0^{\infty}J_{\frac{d}{2}-1}(rz) r^{\frac{d}{2}}\left(1-\frac{r^{\delta\lambda}}{(1+r^\delta)^\lambda}\right)\text{d}r \\
\sim \frac{z^{1-\frac{d}{2}}}{(2\pi)^{\frac{d}{2}} } \int_0^{\infty}\frac{(r z)^{\frac{d}{2}-1}}{2^{\frac{d}{2}-1}\Gamma(d/2)}  r^{\frac{d}{2}}\left(1-\frac{r^{\delta\lambda}}{(1+r^\delta)^\lambda}\right)\text{d}r \\
= \frac{1}{\pi^{\frac{d}{2}} 2^{d-1}\Gamma(d/2) } \int_0^{\infty} r^{d-1}   \left(1-\frac{r^{\delta\lambda}}{(1+r^\delta)^\lambda}\right)\text{d}r.
\end{eqnarray*}

We make a change of variable  of the type $r^\delta=u$, and we find that, if $\delta>d$,
 \begin{equation}\label{density_spec_4}
\begin{aligned}
\widehat{\cal D}_{\delta,\lambda}(\boldsymbol{z}) &\sim  \frac{1}{\delta \pi^{\frac{d}{2}} 2^{d-1}\Gamma(d/2) } \int_0^\infty  u^{(d-\delta)/\delta}\left(1-\frac{u^{\lambda}}{(1+u)^\lambda}\right)\text{d}u\\
\end{aligned}
\end{equation}
This integral is a finite constant under the condition $\delta>d$ and may be found by the change of the variables
\[
\begin{aligned}
\tau = \frac{u}{1+u} \Rightarrow   u = \frac{\tau}{1-\tau}:
\end{aligned}
\]

\[
\begin{aligned}
\int_0^\infty  u^{(d-\delta)/\delta}\left(1-\frac{u^{\lambda}}{(1+u)^\lambda}\right)\text{d}u  &=  -\int_0^1 \frac{\tau^{\frac{d}{\delta} -1}}{(1-\tau)^{\frac{d}{\delta} + 1}}\left(1-\tau^\lambda\right)\text{d}\tau\\
&=  -\lim_{\varepsilon \to 0 }\int_0^1 \frac{\tau^{\frac{d}{\delta} -1}}{(1-\tau)^{\frac{d}{\delta} + 1 - \varepsilon}}\left(1-\tau^\lambda\right)\text{d}\tau\\
&= -\lim_{\varepsilon \to 0 }\left[{\rm B}\left(\frac{d}{\delta},-\frac{d}{\delta} + \varepsilon\right)  - {\rm B}\left(\frac{d}{\delta} + \lambda,-\frac{d}{\delta} +  \varepsilon\right)\right]
\end{aligned}
\]

\begin{equation}\label{formula}
\begin{aligned}
&= -\lim_{\varepsilon \to 0 }\left[\frac{\Gamma\left(\frac{d}{\delta}\right)}{\Gamma(\varepsilon)}  - \frac{\Gamma\left(\frac{d}{\delta}+\lambda\right)}{\Gamma(\varepsilon + \lambda)}\right]
\Gamma\left(-\frac{d}{\delta} + \varepsilon\right)\\
&=\frac{\Gamma\left(\frac{d}{\delta}+\lambda\right)\Gamma\left(-\frac{d}{\delta} \right)}{\Gamma(\lambda)}.
\end{aligned}
\end{equation}

Thus, we have
\[
 \begin{aligned}
 \widehat{\cal D}_{\delta,\lambda}(\boldsymbol{z}) \sim  \frac{1}{\delta \pi^{\frac{d}{2}} 2^{d-1}\Gamma(d/2) }    \frac{\Gamma(-\mathrm{d}/\delta)\Gamma(d/\delta+\lambda)}{\Gamma(\lambda)}.
\end{aligned}
\] 


This result coincides with $n=0$ term  (\ref{n0term-sum2}) of the second sum of Eqs. (\ref{spectrGW_2})  and  (\ref{spectrGW_2-2}). We may conclude this limit $z \to 0$ is smooth
for the Dagum spectral density if
$\delta > d.$ The Dagum spectral density is continuous at the point $z = 0$ under the condition $\delta > d.$  The limit $z \to 0$
corresponds to the behaviour of the Dagum correlation function at $r \to \infty$ and to integrability of its  Fourier transformation. In this case the Dagum spectral density is not singular at $z=0$ because the integral
of the Fourier transformation is convergent for $z=0$ under the condition  $\delta > d.$

\end{proof}

\begin{proof}[Proof of Theorem~\emph{\ref{the5}}]

To find the high frequency behaviour of the Dagum spectral density, we need to use Equation (\ref{SDI1}). Indeed, as $z\to\infty$, we have


\begin{equation}
\begin{aligned}
\widehat{\cal D}_{\delta,\lambda}(\boldsymbol{z})&= -\frac{z^{1-\frac{d}{2}}}{2^{\frac{d}{2} -1}\pi^{\frac{d}{2} +1}}
\mathrm{Im}\int_0^\infty K_{\frac{d}{2}-1}(z t)\left(1-\frac{t^{\delta\lambda}e^{i\pi\delta\lambda/2}}{(1+e^{i\pi\delta/2}t^\delta)^\lambda}\right)t^{\frac{d}{2}}\,\mathrm{d}t\\
&= -\frac{z^{-d}}{2^{\frac{d}{2} -1}\pi^{\frac{d}{2} +1}}
\mathrm{Im}\int_0^\infty K_{\frac{d}{2}-1}(t)
\left(1-\frac{z^{-\delta\lambda}t^{\delta\lambda}e^{i\pi\delta\lambda/2}}{(1+e^{i\pi\delta/2}z^{-\delta}t^\delta)^\lambda}\right)
t^{\frac{d}{2}}\,\mathrm{d}t\\
&= \frac{z^{-d}}{2^{\frac{d}{2} -1}\pi^{\frac{d}{2} +1}}
\mathrm{Im}\int_0^\infty K_{\frac{d}{2}-1}(t)
(1 + z^{\delta} e^{-i\pi\delta/2} t^{-\delta})^{-\lambda}
t^{\frac{d}{2}}\,\mathrm{d}t\\
&= \frac{z^{-d}}{2^{\frac{d}{2} -1}\pi^{\frac{d}{2} +1}}
\mathrm{Im}\int_0^\infty K_{\frac{d}{2}-1}(t) z^{-\delta\lambda} e^{i\pi\lambda\delta/2} t^{\lambda\delta}
(1 + z^{-\delta} e^{i\pi\delta/2} t^{\delta})^{-\lambda}
t^{\frac{d}{2}}\,\mathrm{d}t\\
&= \frac{z^{-d}}{2^{\frac{d}{2} -1}\pi^{\frac{d}{2} +1}}
\mathrm{Im}\int_0^\infty K_{\frac{d}{2}-1}(t) z^{-\delta\lambda} e^{i\pi\lambda\delta/2} t^{\lambda\delta}
\sum_{k=0}^{\infty}\frac{(-1)^k \Gamma(\lambda+k)e^{i k\pi\delta/2}t^{k\delta}}{k! \Gamma(\lambda)z^{k\delta}}
t^{\frac{d}{2}}\,\mathrm{d}t\\
&= \frac{z^{-d - \delta\lambda}}{2^{\frac{d}{2} -1}\pi^{\frac{d}{2} +1}}
\mathrm{Im}\displaystyle\sum_{k=0}^{\infty}\frac{(-1)^k \Gamma(\lambda+k)e^{i (k+\lambda)\pi\delta/2}}{k!
\Gamma(\lambda)z^{k\delta}}\int_0^\infty K_{\frac{d}{2}-1}(t)t^{(k+\lambda)\delta+\frac{d}{2}}\,\mathrm{d}t. \\
\end{aligned}
\end{equation}
 Using \cite[6.561,16]{zwillinger2014table}, we obtain
\[
\begin{aligned}
\widehat{\cal D}_{\delta,\lambda}(\boldsymbol{z})
& = \frac{z^{-d - \delta\lambda}}{\pi^{\frac{d}{2} +1}}
\mathrm{Im}\displaystyle\sum_{k=0}^{\infty}\frac{(-1)^k 2^{(k+\lambda)\delta}\Gamma(\lambda+k)\Gamma(\frac{d+(k+\lambda)\delta}{2})
\Gamma(1+\frac{(k+\lambda)\delta}{2})e^{i (k+\lambda)\pi\delta/2}}{k! \Gamma(\lambda)z^{k\delta}}\\
& = \frac{z^{-d - \delta\lambda}}{\pi^{\frac{d}{2} +1}}
\displaystyle\sum_{k=0}^{\infty}\frac{(-1)^k 2^{(k+\lambda)\delta}\Gamma(\lambda+k)\Gamma(\frac{d+(k+\lambda)\delta}{2})
\Gamma(1+\frac{(k+\lambda)\delta}{2})\sin{(k+\lambda)\pi\delta/2}}{k! \Gamma(\lambda)z^{k\delta}}.
\end{aligned}
\]

 We use  Euler\textquotesingle s formula given by
\[
\sin(k\delta\pi/2)=\frac{\pi}{\Gamma(k\delta/2)\Gamma(1-k\delta/2)},
\]
and  we write
  \begin{equation}
\begin{aligned}
\widehat{\cal D}_{\delta,\lambda}(\boldsymbol{z})& = \frac{2^{\delta\lambda}z^{-d - \delta\lambda}}{\pi^{\frac{d}{2}}\Gamma(\lambda)}
\displaystyle\sum_{k=0}^{\infty}\frac{(-1)^k \Gamma(\lambda+k)\Gamma(d/2 + (k+\lambda)\delta/2)\Gamma(1+(k+\lambda)\delta/2)}{k!\Gamma((k+\lambda)\delta/2)\Gamma(1-(k+\lambda)\delta/2)}\left(\frac{2}{z}\right)^{k\delta}\\
&= \frac{2^{\delta\lambda}z^{-d - \delta\lambda}}{\pi^{\frac{d}{2}}\Gamma(\lambda)} \,_3\Psi_2
\Big[\begin{matrix}
(\lambda,1) & ( \delta\lambda/2 + 1, \delta/2 ) & ((\delta\lambda+d)/2,\delta/2)\\
 & (\delta\lambda/2 , \delta/2 ) & (1-\delta\lambda/2,-\delta/2)
 \end{matrix}
; -\left(\frac{2}{z}\right)^{\delta} \Big].
\end{aligned}
\end{equation}

However, in this limit we should leave only the first term of this sum
\begin{equation} \label{asymp}
\begin{aligned}
\widehat{\cal D}_{\delta,\lambda}(\boldsymbol{z})& \sim \frac{2^{\delta\lambda-1}\lambda\delta z^{-d - \delta\lambda}}{\pi^{\frac{d}{2}}}
\frac{ \Gamma(d/2 + \lambda\delta/2)}{\Gamma(1-\lambda\delta/2)}
\end{aligned}
\end{equation}

The proof is completed.

\end{proof}

At the end, we would like to comment that the same result for the asymptotic (\ref{asymp}) of the spectral density may be found by transforming the integrand of the contour integral  (\ref{tonto-lava})
to another integrand such that the convergence of the series which arises in the result of the residues calculus when we close  the contour of the integral to the left complex infinity
becomes obvious because this series would satisfy the  standard criteria of convergence. Such a form of the integrand is useful to study the asymptotic of the spectral density at $z \to \infty$
and would allow us to reproduce the asymptotical result (\ref{asymp}).

\bibliographystyle{apt}
\bibliography{template}
\end{document}